\title{Whitney's formulas for curves on surfaces}
\author{Yurii Burman}
\address{121002, Independent University of Moscow, 11, B.
Vlassievsky per., Moscow, Russia} \email{burman@mccme.ru}
\author{Michael Polyak}
\address{Department of Mathematics, Technion- Israel
Institute of Technology, Haifa 32000, Israel}
\email{polyak@math.technion.ac.il}
\date{}
\newtheorem{thm}{Theorem}
\newtheorem{prop}[thm]{Proposition}
\newtheorem{cor}[thm]{Corollary}
\newtheorem{lem}[thm]{Lemma}
\theoremstyle{definition}
\newtheorem{ex}[thm]{Example}
\newtheorem{rem}{Remark}
\def \g {\gamma}
\def \tg {\gamma\,'}
\def \gO {\Omega}
\def \S {\Sigma}
\def \uS {\widetilde{\Sigma}}
\def \R {{\mathbb R}}
\def \ind {\name{ind}}
\def \sminus{\smallsetminus}
\def \Z {{\mathbb Z}}
\def \T {{\mathbb T}}
\def \lmod#1\rmod {\vphantom{#1}\left|\smash{#1}\right|}
\newcommand \bydef {\stackrel{\mbox{\scriptsize def}}{=}}
\newcommand{\pder}[2] {\frac{\partial #1}{\partial #2}}
\newcommand \eps {\varepsilon}
\renewcommand \phi {\varphi}
\renewcommand \rho {\varrho}
\def \sgn {\name{sgn}}
\def \Traj {\Phi}
\def \windex {\name{w}}
\def \Ann {\name{K}^\eps}
\def \indT {\ind_T(\g,p)}
\def \gT {\langle \g \rangle_T}
\def \indeps {\ind_\eps(\g,p)}
\def \geps {\langle \g \rangle_\eps}
\def \ginfty {\langle \g \rangle_\infty}
\def \ev {\name{ev}}
\def \I {\operatorname{I}}
\def \proj {\name{proj}}
\def\fig#1#2#3#4
\begin{document}

 \begin{abstract}
The classical Whitney formula relates the number of times an
oriented plane curve cuts itself to its rotation number and the
index of a base point. In this paper we generalize Whitney's formula
to curves on an oriented punctured surface $\S_{m,n}$, obtaining a
family of identities indexed by elements of $\pi_1(\S_{m,n})$. To
define analogs of the rotation number and the index of a base point
of a curve $\g$, we fix an arbitrary vector field on $\S_{m,n}$.
Similar formulas are obtained for non-based curves.
 \end{abstract}

\keywords{Whiney formula, curves on surfaces, rotation number,
self-intersections}

\subjclass[2000]{57N35, 57R42, 57M20}

\thanks{The first author was supported by the Scientific foundation of the HSE project 09-01-0015 ``Hurwitz generating functions and embedded graphs'', by the CRDF grant RUM1-2895-MO-07 ``Rational Cherednik algebras, inverse Macaulay systems, and complete integrability'', by the RFBR grants 08-01-00110-a and N.Sh.-709.2008.1 (``Scientific school of V.I.Arnold''). The second author was partially supported by the ISF grant 1261/05.}

\maketitle

\section{Introduction}

In this paper we study self-intersections of smooth immersed curves
on an oriented surface $\S=\S_{m,n}$ of genus $m$ with $n$
punctures. Fix $p\in\S$ and denote $\pi=\pi_1(\S,p)$. We will
consider immersions $\g: [0,1] \to \S$ with $\g(0) = \g(1)= p$ and
$\tg(0) = \tg(1)$ --- we call them {\em closed curves with the base
point $p$}. Throughout the paper, all curves are assumed to be
generic, i.e. their only singularities are double points of
transversal self-intersection, distinct from $p$.

\subsection{Self-intersections of an immersed curve}
\label{Sec:Selfintersec}
Let $\g$ be a closed curve and $d$ be a self-intersection point $d =
\g(u) = \g(v)$, $u<v$. Define $\sgn(d) = +1$ if the orientation of
the basis $(\tg(u),\tg(v))$ coincides with the one prescribed by the
orientation of $\S$, and $\sgn(d) = -1$ otherwise, see Figure
\ref{Fig:Signs}a. Turaev \cite{Turaev} constructed an important
element of the group ring $\Z[\pi]$ corresponding to $\g$, in the
following way. Let $\tau_d(\g)\in\pi$ be the homotopy class of a
loop $\g(t)$ with $t \in [0,u] \cup [v,1]$. Denote $D(\g)$ the set
of double points of $\g$ and define the element
$\langle\g\rangle\in\Z[\pi]$ by
 %*
 \begin{equation}\label{Eq:Sum}
\langle \g \rangle=\sum_{d\in D(\g)}\sgn(d)\tau_d(\g).
 \end{equation}
 %*
In particular, for a curve on $\S=\R^2$ one has $\Z[\pi]=\Z$ so
$\langle\g\rangle=\sum_{d\in D(\g)}\sgn(d)\in\Z$.

 \begin{ex}\label{Ex:Radial1}
Let $\S=\S_{0,2}=\R^2\sminus\{0\}$ and denote by $g$ the generator of $\pi$
(represented by a small loop around $0$). For a curve $\g$ shown in Figure
\ref{Fig:Radial}a we have $\langle\g\rangle=g^2-g$; signs of
self-intersections and the corresponding curves $\tau_d$ are shown in
Figure \ref{Fig:Radial}b.
 \end{ex}

For a curve $\g$ on $\S=\R^2$ one may define its {\em Whitney index} (or
winding number) $\windex(\g)$ as the number of full rotations made by the
tangent vector $\tg(t)$ around the origin, as $t$ moves from $0$ to $1$.

For a curve $\g$ on $\S=\R^2$ and $x\in\R^2\sminus\g$ one can also
define $\ind(\g,x)$ as the number of times the curve $\g$ circles
around $x$. In other words, it is the linking number of a $1$-cycle
$[\g]$ with the $0$-chain $[\infty]-[x]$ (composed of a point near
infinity taken with the positive sign and $x$ taken with the
negative sign). It can be calculated as the intersection number of
the curve $\g$ with any ray starting in $x$ and going to infinity.
If $\g$ is a plane curve with a base point $p$, we define
$\ind(\g,p)\in\frac{1}{2}\Z$ by averaging the values of $\ind$ on
two components of $\R^2\sminus \g$ adjacent to $p$.

H.\,Whitney in \cite{Whitney} considered closed curves with a base point on
$\S=\R^2$ and showed that

 \begin{thm}[\cite{Whitney}] \label{Th:Whitney}
Let $\g: [0,1] \to \R^2$ be a generic immersed curve with the base point $p
= \g(0) = \g(1)$. Then
 %*
 \begin{equation}\label{Eq:Whitney}
\langle \g \rangle = - \windex(\g) + 2\ind(\g,p).
 \end{equation}
 %*
 \end{thm}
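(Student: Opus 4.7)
The plan is a deformation argument: show that both sides of (\ref{Eq:Whitney}) transform identically under the codimension-one moves connecting any two based plane curves, and then verify the formula on one representative per Whitney index. A generic regular homotopy of $\g$ keeping $p$ fixed meets only three types of transversal strata, all occurring away from $p$: direct self-tangency, inverse self-tangency, and triple point. At each one $\windex(\g)$ is preserved (it is the degree of the Gauss map, a regular-homotopy invariant), and $\ind(\g,p)$ is preserved because the modification is local and disjoint from $p$. For $\langle\g\rangle$, a short computation in local coordinates shows that at either kind of self-tangency the two newly created (or destroyed) double points carry opposite signs, so $\sum\sgn(d)$ is unaffected; at a triple point the three double points persist with their signs unchanged.

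To also connect curves whose base points lie at different places on the same geometric image, I would further consider the reparameterization in which $p$ slides along $\g$ past a double point $d=\g(u)=\g(v)$. This swaps the roles of $u$ and $v$, hence negates $\sgn(d)$ and changes $\langle\g\rangle$ by $-2\sgn(d)$. On the right-hand side $\windex(\g)$ is unaffected (being a property of the unparameterized image), while a local analysis of the four sectors around $d$ shows that the pair of regions adjacent to the new position of $p$ has winding numbers differing from the old pair in such a way that $\ind(\g,p)$ changes by exactly $-\sgn(d)$; thus $-\windex(\g)+2\ind(\g,p)$ changes by $-2\sgn(d)$ as well.

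With the two sides shown to transform identically under all the moves, it remains to check the formula on one explicit curve per Whitney index: for instance a positively oriented round circle for $k=1$ (where $\langle\g\rangle=0$, $\windex=1$, $\ind(\g,p)=\tfrac12$), a figure eight with oppositely oriented loops for $k=0$, and their obvious analogues (nested loops or ``inner limaçon'' models) for general $k$. The main obstacle is the sign bookkeeping of the second step: one must verify that the orientation conventions make $\ind(\g,p)$ jump by precisely $-\sgn(d)$ rather than by $\pm 1$ independently of the sign of $d$. This is the classical dictionary between the orientation of an immersed plane curve and the winding numbers of its complementary regions, and once it is set up cleanly the invariance closes and the formula follows.
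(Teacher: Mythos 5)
Your argument is correct in outline, but it takes a genuinely different route from the paper. You prove \eqref{Eq:Whitney} by invariance under the codimension-one strata of a generic regular homotopy (direct and inverse self-tangencies, triple points, and --- via your base-point slide --- the stratum where a double point crosses $p$), followed by a check on one normal form per Whitney index, with the Whitney--Graustein theorem supplying connectivity. The paper instead obtains \eqref{Eq:Whitney} as the planar specialization of Theorem \ref{Th:Finite}: the quantity $\langle \g \rangle$ is realized as the intersection number of the evaluated configuration space $\ev_\tau(C^\eps)$ with the diagonal $\Delta$, and homotoping $\Delta$ along the flow of a vector field $X$ trades this count for boundary contributions which, for $\S=\R^2$, $X=\pder{}{x}$ and large $T$, are exactly $-\windex(\g)$ and $2\ind(\g,p)$ (with $\gT=0$). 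Your approach is more elementary and self-contained in the plane, but it relies on the classification of immersions up to regular homotopy and on a normal-form verification for every index, and it does not scale to the setting of the paper: on $\S_{m,n}$ the invariant lives in $\Z[\pi]$, so at a self-tangency you would need the two new crossings to cancel not only in sign but also in the class $\tau_d$, and there is no usable list of normal forms. The intersection-theoretic proof needs neither ingredient and generalizes verbatim. Two points in your write-up should be made explicit: (i) your first paragraph claims the homotopy meets only three strata, ``all occurring away from $p$,'' yet an arc of the curve can sweep across the point $p$ during a homotopy fixing $\g(0)$ --- this is exactly the fourth stratum you later analyze as the base-point slide, so it must be listed among the generic events, not only invoked for re-basing; (ii) the computation $\Delta\ind(\g,p)=-\sgn(d)$ at that stratum, which you rightly flag as the crux, has to be carried out with the same orientation convention as in Figure \ref{Fig:Signs}a, since an error of overall sign there would silently break the cancellation.
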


\fig{width=5in}{signs}{Signs of self-crossings and some simple curves on
$\R^2$.}{Fig:Signs}

 \begin{ex}
For a curve $\g$ shown on Figure \ref{Fig:Signs}b we have $\langle
\g\rangle = +1$, $\windex(\g)=2$, and $\ind(\g,p)=3/2$. For a curve $\g$
shown on Figure \ref{Fig:Signs}c we have $\langle \g\rangle = +1-1 = 0$,
$\windex(\g)=1$, and $\ind(\g,p)=1/2$.
 \end{ex}

\subsection{Curves on surfaces}
 \label{Sub:Surf}
The main results of this paper are generalizations of
Theorem~\ref{Th:Whitney} for curves on surfaces. We define appropriate
surface versions of expressions in both sides of \eqref{Eq:Whitney} and
relate them.

To define an analog of the Whitney index for a curve $\g: S^1 \to \S$, we
fix a vector field $X$ on $\S$ having no zeros on the curve $\g$. Define,
following \cite{Chilling, Reinhart} $w(\g,X)$ to be the number of rotations
of $\tg(t)$ relative to $X$. It can be calculated as the algebraic number
of points in which $\tg(t)$ looks in the direction of $X$; each such point
is counted with a positive sign, if $\tg$ turns counter-clockwise relative
to $X$ in a neighborhood of $\g(t)$ and with a negative sign otherwise.
It is clear that $w(\g, X)$ does not change under the homotopies of $\g$
and $X$ (as long as $\g$ stays an immersion and $X$ has no zeros on its image). Define the {\em Whitney index} of $\g$ as $\windex(\g,X) \bydef w(\g,X) [\g] \in
\Z[\pi]$ where $[\g]$ is the class of homotopy represented by the curve $\g$.

% \begin{rem}
% The integer $w(\g,X)$ depends on the field $X$. Nevertheless, if $X$ has
% no zeros, the residue $w(\g,X)\pmod\chi(\S)$ depends only on $\g$ and can
% be defined without the use of $X$, see e.g.\ \cite{MC}.
% \end{rem}

In particular, if $\S=\S_{0,1}=\R^2$ and $X=\pder{}{x}$ is a horizontal
vector field directed from left to right, then $\pi$ is trivial and
$\windex(\g,X)=\windex(\g)$ is the usual Whitney index of $\g$.

\fig{width=5in}{radial}{A curve on a punctured plane and a radial
vector field.}{Fig:Radial}

 \begin{ex}\label{Ex:Radial2}
Let's return to Example \ref{Ex:Radial1}. Let $X =
f(x,y)(x\pder{}{x}+y\pder{}{y})$ be a radial vector field on $\S =
\S_{0,2} = \R^2\sminus\{0\}$; the function $f(x,y)>0$ is chosen so
that the field $X$ is smooth. One may check that
$\windex(\g,X)=(\windex(\g)-k)g^k$, where $\windex(\g)$ is the usual
Whitney index of $\g$, $k=\ind(\g,0)$ is the number of times $\g$ circles
around $\{0\}$, and $g$ is the generator of $\pi =
\pi_1(\R^2\sminus\{0\})$. In particular, for the curve $\g$ shown on
Figure \ref{Fig:Radial}a, the number of rotations of $\tg$ relative to
$X$ is $-1$ (indeed, there is only one positive tangency point of $X$
with $\g$, denoted by $a$ in Figure \ref{Fig:Radial}c; its sign is $-1$).
Also, $[\g]=g^2$, thus $\windex(\g,X)=-g^2$.
 \end{ex}

The results obtained in this article generalize those of the papers
\cite{Polyak} and \cite{R2T2} where the curves on $\S_{0,1}=\R^2$ and
$\S_{1,0}=\T^2$ were considered; the methods we use are similar to those of
\cite{R2T2}.

We consider both curves with a base point (Theorem \ref{Th:Finite}),
and curves without base points (Theorem~\ref{Th:NBFinite}). Both
theorems are proved by similar methods, so we collected all proofs
in Section~\ref{Sec:Proofs}.

\section{Curves with base points}\label{Sec:Curves}

\subsection{Making loops from pieces}\label{Sub:Compose}
For two generic paths $\g_i:[a_i,b_i]\to\S$, $i=1,2$ with $\g_1(a_1)
= \g_2(b_2) = p$ we may define an element similar to \eqref{Eq:Sum}
as follows. Let $d = \g_1(u) = \g_2(v)$ be an intersection point of
$\g_1$ and $\g_2$, with $u \ne a_1,b_1$, $v \ne a_2,b_2$. Again,
define $\sgn(d) = +1$ if the orientation of the basis
$(\tg_1(u),\tg_2(v))$ coincides with the one prescribed by the
orientation of $\S$, and $\sgn(d) = -1$ otherwise. Let
$\tau_d(\g_1,\g_2)\in\pi$ be the homotopy class of a loop composed
of two arcs: $\g_1(t_1)$ with $t_1 \in [a_1,u]$, and $\g_2(t_2)$
with $t_2 \in [v,b_2]$, see Figure \ref{Fig:Def}a.

\fig{height=1.4in}{def}{Making a loop out of two and three
pieces.}{Fig:Def}

Denote by $D(\g_1,\g_2)$ the set of intersections of $\g_1$ and
$\g_2$ and define $\langle \g_1,\g_2 \rangle \in\Z[\pi]$ by
 %*
 \begin{equation}\label{Eq:SumGG}
\langle \g_1,\g_2 \rangle= \sum_{d\in
D(\g_1,\g_2)}\sgn(d)\tau_d(\g_1,\g_2).
 \end{equation}
 %*

\subsection{Intersections and $\tau$-indices}
 \label{Sub:Finite}
Let $X$ be a vector field on the surface $\S$. Throughout the paper,
we will assume that every trajectory of $X$ is infinitely
extendable, so a one-parametrical group $\Traj_t$ of diffeomorphisms
generated by $X$ is well-defined. Denote by $\Traj(a)$ its integral
trajectory starting at the point $a \in \S$.

Given $X$, we generalize formula \eqref{Eq:Sum} as follows. Let $\g:
[0,1] \to \S$ be a generic immersed curve with the base point $p =
\g(0) = \g(1)$. Fix $T>0$ such that $\Traj_{\pm T}(p)\notin\g$.
Denote by $\Traj_-$ and $\Traj_+$ the negative (resp. positive)
$T$-time semi-trajectories $\Traj_t(p)$, $t \in [-T,0]$ (resp. $t
\in [0,T]$) of the base point $p$.

Define the {\em $T$-time index of $p$ with respect to $\g$} by
 %*
 \begin{equation}\label{Eq:DefInd}
\indT= \frac12 \bigl( \langle \g, \Traj_- \rangle + \langle \Traj_+,
\g \rangle \bigr) \in \frac12\Z[\pi]
 \end{equation}
 %*
Denote by $\g_T: [0,1] \to \S$ a $T$-shift of $\g$ along $X$:
$\g_T(t)=\Traj_T(\g(t))$. Suppose that $X$ does not vanish on $\g$
and all the intersections of $\g_T$ with $\g$ are transversal double
points $d=\g_T(u)=\g(v)$. Define $\sgn(d) = +1$ if the orientation
of the basis $(\tg_T(u),\tg(v))$ coincides with the orientation of
$\S$, and $\sgn(d) = -1$ otherwise. Let $\tau_{d,T}(\g)\in\pi$ be
the homotopy class of a loop composed of three arcs: $\g(t_1)$ with
$t_1 \in [0,u]$, $\Traj_{t_2}(\g(u))$ with $t_2 \in [0,T]$, and
$\g(t_3)$ with $t_3 \in [v,1]$, see Figure \ref{Fig:Def}b.

Between all such intersection points $d$ pick only the ones with
$u<v$; denote this set by $D_T(\g)$. Define the element
$\gT\in\Z[\pi]$ by
 %*
 \begin{equation}\label{Eq:SumT}
\gT=\sum_{d\in D_T(\g)}\sgn(d)\tau_{d,T}(\g).
 \end{equation}
 %*
In particular, for a curve on $\S=\R^2$ one has $\Z[\pi]=\Z$ so
$\gT\in\Z$.

For a small $T=\eps$ intersections $\g(u)=\g_\eps(v)$ appear near double
points of $\g$ and near points in which $\g$ is tangent to $X$ (the
condition $u<v$ means in this case that the tangent vector should be
directed in the direction of $X$). After checking the signs, we see that
the contribution of double points to $\geps$ equals $\langle \g \rangle$,
while the contribution of points in which $\g$ is tangent to $X$ equals
$\windex(\g,X)$ (recall that we consider $\windex(\g,X)$ as a multiple
of the class $[\g]$) and thus obtain
 %*
 \begin{equation}\label{Eq:epsilon}
\geps=\langle \g \rangle+\windex(\g,X)
 \end{equation}
 %*

 \begin{thm}\label{Th:Finite}
Let $\g: [0,1] \to \S$ be a generic immersed curve with the base
point $p = \g(0) = \g(1)$. Let $X$ be a vector field which does not
vanish on $\g$ and is transversal to $\g$ at the base point $p$.
Suppose that $T>0$ is such that all intersections of $\g$ with
$\g_T$ are transversal double points distinct from $p$. Then
 %*
 \begin{equation}\label{Eq:Finite}
\langle \g \rangle = \gT - \windex(\g,X) + 2\indT.
 \end{equation}
 %*
 \end{thm}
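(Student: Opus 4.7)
My plan is to rewrite \eqref{Eq:Finite} in the equivalent form $\gT+2\indT=\langle\g\rangle+\windex(\g,X)$, prove that its left-hand side is independent of $T$, and verify it takes the desired value for small $T$. For $T=\eps>0$ sufficiently small, transversality of $X$ to $\g$ at $p$ forces the semi-trajectories $\Traj_\pm$ to be short arcs meeting $\g$ only at their shared endpoint $p$; this intersection is excluded by the conventions $u\ne a_1,b_1$ and $v\ne a_2,b_2$ in the definition of $\langle\g_1,\g_2\rangle$, so $\indeps=0$, and \eqref{Eq:epsilon} immediately yields $\geps+2\indeps=\langle\g\rangle+\windex(\g,X)$.

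To prove invariance I vary $T$ continuously and analyse the discrete set of critical values $T_0$ at which genericity of the intersection pattern of $\g_T$, $\g$ and $\Traj_{\pm T}(p)$ fails. These are of three types: (A) an interior tangency of $\g_{T_0}$ with $\g$ at some $(u_0,v_0)$ with $u_0\ne v_0$; (B) a moment with $\Traj_{T_0}(p)=\g(v_0)$ for some $v_0\in(0,1)$; and (C) a moment with $\Traj_{-T_0}(p)=\g(u_0)$ for some $u_0\in(0,1)$. In case (A), two intersections of $\g_T$ with $\g$ are simultaneously born or annihilated with opposite signs and homotopic defining loops (their base data agree up to arbitrarily small arcs); depending on the sign of $u_0-v_0$, they either both belong to $D_T(\g)$ or both lie outside it, so their contributions to $\gT$ cancel. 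Since $\indT$ is unaffected, $\gT+2\indT$ does not jump.

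In case (B), put $q=\g(v_0)$. As $T$ crosses $T_0$, two boundary intersections of $\g_T$ with $\g$ appear or disappear, one near $(0,v_0)$ and one near $(1,v_0)$; only the first has $u<v$, hence contributes to $\gT$. Simultaneously $\Traj_+$ gains or loses a transverse intersection with $\g$ at $q$, so $\langle\Traj_+,\g\rangle$ and hence $\indT$ also jumps. A first-order local expansion near $q$, writing $X(q)=a\,D\Traj_{T_0}(\tg(0))+b\,\tg(v_0)$, shows that the new $\gT$-intersection lies inside the parameter interval $[0,1]$ only on the side of $T_0$ determined by $\sgn(a)$; that its sign is $\sgn_1:=\sgn\det\bigl(D\Traj_{T_0}(\tg(0)),\tg(v_0)\bigr)$; and that the sign of the new $\Traj_+\cap\g$ intersection equals $\sgn\det(X(q),\tg(v_0))=\sgn(a)\cdot\sgn_1$. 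The defining loops in $\pi$ for the two contributions agree (both reduce, after contracting negligible arcs of $\g$, to the concatenation of the trajectory arc from $p$ to $q$ with $\g|_{[v_0,1]}$). A short case analysis on $\sgn(a)$ shows that the jump in $\gT$ is always the negative of the jump in $\langle\Traj_+,\g\rangle$, so $\gT+2\indT$ is unchanged. Case (C) is handled symmetrically, with $\Traj_-$ in place of $\Traj_+$ and an intersection near $(u_0,1)$ replacing the one near $(0,v_0)$; flow-invariance $D\Traj_T(X)=X\circ\Traj_T$ and orientation-preservation of $\Traj_T$ are used to identify the relevant signs and loop classes.

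The main obstacle is the sign bookkeeping in cases (B) and (C): one must align three orientation conventions (the sign of a $\g_T\cap\g$ intersection, the sign of a $\Traj_\pm\cap\g$ intersection, and the side of $T_0$ on which the boundary intersection enters the parameter interval) so that a factor of $\sgn(a)$ produced by the first is precisely matched by one produced by the second, with no residual sign. Once (A)-(C) are verified, $\gT+2\indT$ is locally and hence globally constant on admissible $T$, equals its value $\geps+2\indeps=\langle\g\rangle+\windex(\g,X)$ at small $T$, and rearranges to \eqref{Eq:Finite}.
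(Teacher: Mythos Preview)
Your approach is sound and genuinely different from the paper's. The paper does not argue by varying $T$; instead it interprets each coefficient of $\tau\in\pi$ in \eqref{Eq:Finite} as an intersection number in the $4$-manifold $\uS\times\uS$ (with $\uS$ the universal cover). Concretely, they lift the evaluation map $(u,v)\mapsto(\g(u),\g(v))$ on a truncated triangle $C^\eps=\{\eps\le u,\ u+\eps\le v\le 1-\eps\}$ to $\uS\times\uS$ (one lift $\ev_\tau$ for each $\tau$) and compute its intersection number with the diagonal $\Delta$ in two ways: directly, giving $\langle\g\rangle^\tau$, and by replacing $\Delta$ with its shift $\Delta_T=\{(x,\widetilde\Traj_T(x))\}$ and correcting with the $3$-chain $W_T$ swept out by the homotopy. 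The interior term is $\gT^\tau$, and the three boundary sides of $C^\eps$ contribute exactly $\langle\Traj_+,\g\rangle^\tau$, $\langle\g,\Traj_-\rangle^\tau$, and the $\windex$-term. Thus their identity is a single application of $\partial W_T=\Delta_T-\Delta$, with the $\tau$-grading built in via the covering.

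Your route trades this structural machinery for an elementary wall-crossing argument in the single parameter $T$: no covering spaces, no $4$-manifold intersection theory, at the price of the sign bookkeeping you flag in cases (B)--(C). One case your list does not mention: if some point $\g(u_0)$ lies on a periodic orbit of $X$ of period $T_0$ between $\eps$ and the given $T$, then as $T$ passes through $T_0$ a transversal intersection $(u(T),v(T))$ of $\g_T$ with $\g$ can cross the diagonal $u=v$, entering or leaving $D_T(\g)$ without any compensating change in $\indT$. You should either add this as a fourth wall or note that one may perturb $\g$ off the (isolated) periodic orbits of period $\le T$ before running the argument.
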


 \begin{rem}
For a small $T=\eps$ we have $\indeps=0$ and $\geps$ is given by
\eqref{Eq:epsilon}, so in this case equality \eqref{Eq:Finite} is
trivial.
 \end{rem}

 \begin{ex}
Let $\S=\S_{0,1}=\R^2$ and $X=\pder{}{x}$ be a horizontal vector
field directed from left to right. Here $\pi$ is trivial, and
$\windex(\g,X)=\windex(\g)$ is the usual Whitney index of $\g$. For
a large $T$, $\g_T$ is shifted far from $\g$, so $\gT=0$. Also,
$\indT$ counts intersections of $\g$ with both horizontal rays
emanating from $p$ to the left and to the right. So it equals to the
index $\ind(\g,p)$ of the base point, introduced in Section
\ref{Sec:Selfintersec}. Equation \eqref{Eq:Finite} turns in this
case into the classical Whitney's formula \eqref{Eq:Whitney}.
 \end{ex}

 \begin{ex}\label{Ex:Radial3}
Let's return to Example \ref{Ex:Radial2}. For a large $T$ the curve
$\g_T$ is shifted far from $\g$, so $\gT=0$. The curve $\g$ shown in
Figure \ref{Fig:Radial}a has no intersections with $\Traj_+$, and
only one intersection with $\Traj_-$, denoted by $b$ in Figure
\ref{Fig:Radial}c; its sign is $-1$. The corresponding curve
$\tau_b$ is depicted there in bold. It represents a class $g$, thus
$2\indT=-g$. This agrees with
$\langle\g\rangle+\windex(\g,X)=(g^2-g)-g^2=-g$.
 \end{ex}

\subsection{Formula for an infinite time shift}\label{Sub:Infinite}
Let us consider the behavior of formula \eqref {Eq:Finite} when $T
\to \infty$.

\begin{prop}\label{Th:Infinite}
Let $\g: [0,1] \to \S$ be a generic immersed curve with the base
point $p = \g(0) = \g(1)$. Let $X$ be a vector field which does not
vanish on $\g$. Suppose that the trajectory $\Traj(p)$ of the base
point intersects $\g$ only in a finite number of points. Then limits
$\displaystyle{\ind(\g,p)=\lim_{T\to\infty}\indT}$ and
$\displaystyle{\ginfty=\lim_{T\to\infty}\gT}$ are well defined and
satisfy
%*
 \begin{equation}\label{Eq:Infinite}
\langle \g \rangle = \ginfty - \windex(\g,X) + 2\ind(\g,p).
 \end{equation}

\end{prop}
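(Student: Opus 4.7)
The plan is to split the limit argument into two steps: I first show that $\indT$ stabilizes for large $T$ directly from the finiteness hypothesis on $\Traj(p) \cap \g$, and then I deduce that $\gT$ also stabilizes as a formal consequence of Theorem~\ref{Th:Finite}, whose other ingredients are either $T$-independent or already known to converge.

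For the first step, let $q_1, \ldots, q_N$ enumerate the finite set $\Traj(p) \cap \g$, with $\Traj_{t_i}(p) = q_i$, and set $T_0 := 1 + \max_i |t_i|$. For any $T \geq T_0$ satisfying the defining condition $\Traj_{\pm T}(p) \notin \g$, every $q_i$ lies strictly in the interior of $\Traj_- \cup \Traj_+$, and increasing $T$ beyond $T_0$ contributes no additional points to $D(\g,\Traj_-) \cup D(\Traj_+,\g)$. At each $q_i$, the sign and the homotopy class of the loop formed by the arc of $\g$ from $p$ to $q_i$ followed by the arc of $\Traj$ from $q_i$ back to $p$ depend only on $q_i$ and not on $T$. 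Hence both $\langle\g,\Traj_-\rangle$ and $\langle\Traj_+,\g\rangle$ are constant on the admissible $T$'s in $[T_0,\infty)$, and I define $\ind(\g,p)$ to be their common half-sum.

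For the second step, I invoke Theorem~\ref{Th:Finite}. The set of $T>0$ at which the genericity hypotheses of that theorem fail (a non-transverse tangency of $\g$ and $\g_T$, or a crossing passing through $p$) is closed and nowhere-dense in $(0,\infty)$, so a cofinal sequence $T_n \to \infty$ of admissible values exists. Along it, \eqref{Eq:Finite} reads
\[
\langle\g\rangle_{T_n} = \langle\g\rangle + \windex(\g,X) - 2\ind_{T_n}(\g,p),
\]
whose right-hand side is eventually constant by the first step. Thus $\langle\g\rangle_{T_n}$ stabilizes; applying the same identity to any two admissible $T, T' \geq T_0$ confirms that the limit is independent of the chosen sequence, so $\ginfty$ is well-defined. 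Rearranging the limiting equation gives \eqref{Eq:Infinite}.

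The genuinely delicate point is conceptual rather than computational: a direct description of how intersections of $\g$ with $\g_T$ are created and destroyed as $T$ grows would be cumbersome, and from the definition of $\gT$ alone it is not clear why the resulting element of $\Z[\pi]$ should stabilize. Routing the stability of $\indT$ through the already-proven Theorem~\ref{Th:Finite} to transfer it to $\gT$ bypasses this difficulty entirely.
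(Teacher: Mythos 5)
Your proposal follows essentially the same route as the paper: first show that $\indT$ stabilizes once $T$ exceeds the largest $|t_i|$, then transfer this stability to $\gT$ by rearranging the identity of Theorem~\ref{Th:Finite}, whose remaining terms are $T$-independent. Your additional care about restricting to admissible values of $T$ (where $\g$ and $\g_T$ meet transversally away from $p$) is a minor refinement the paper leaves implicit, not a different argument.
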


\begin{proof}
By the assumption, $\Traj(p)$ intersects $\g$ in a finite number of
points $\Traj_{t_i}(p)$, $i=1,2,\dots,k$ with $t_1<t_2<\dots<t_k$.
Thus when $T>\max(-t_1,t_k)$, the index $\indT$ does not change and
$\lim_{T\to\infty}\indT$ is well-defined.

Now the statement follows from Theorem \ref{Th:Finite}. Indeed, note
that in addition to $\indT$, only one term in equation
\eqref{Eq:Finite} depend on $T$, namely $\gT$. Since equation
\eqref{Eq:Finite} is satisfied for any $T$, we conclude that $\gT$
also does not change when $T>\max(-t_1,t_k)$. Therefore,
$\lim_{T\to\infty}\gT$ is also well-defined. Since the equality
\eqref{Eq:Finite} holds for any $T$, it is satisfied also in the
limit $T \to \infty$.
\end{proof}

\begin{rem}
As an example of how $\g_T$ behaves as $T\to\infty$ consider
$\S=\S_{m,0}$ and let $X$ be a gradient vector field of a (general
position) Morse function on $\S$. This vector field has $2g+2$ zeros
corresponding to $2g$ saddle points, one maximum, and one minimum of
the function. Critical points of neighboring indices are connected
by trajectories of $X$, called separatrices. See Figure
\ref{Fig:Morse}. Each saddle point $q_i$, $i=1,\dots,2g$, is joined
to the maximum by a pair of separatrices, making a loop $l_i$.
Homotopy classes of the loops $l_1,\dots,l_{2g}$ form a system of
generators in $\pi$.

\fig{height=1.6in}{morse}{Separatrices of a gradient field.}{Fig:Morse}
%\vspace{1.6in}

As $T \to \infty$, the curve $\g_T$ is attracted into a neighborhood
of $l_1\cup\dots\cup l_{2g}$.
So one can calculate $\ginfty$ in another way, using intersections of
$\g$ with the separatrices.

This method of calculating $\ginfty$ may be applied to a wider class
of vector fields as well. Separatrices of a field are then trajectories
which bound regions of different dynamics of the flow on $\S$.
Under appropriate assumptions on the field the curve $\g_T$ is
attracted, as $T \to \infty$, into a neighborhood of the union of
separatrices. Therefore $\ginfty$ may be calculated from the
intersections of $\g$ with them.
However, for a general vector field this approach involves a number of
technicalities and requires a lengthy treatment, so we omit it here.
\end{rem}

\section{Curves without base points}\label{SSec:NoBase}
Our goal is to repeat all constructions of Section \ref{Sec:Curves}
for curves without base points. In all formulas we will be using now
free loops instead of based loops. Denote by $\gO$ the space of
homotopy classes of free loops on $\S$.
Let $\g: S^1 \to \S$ be an oriented curve without a base point;
we are to define an appropriate analogue of $\langle \g \rangle$
in this case. Let $d$ be a self-intersection of $\g$.
Smoothing $\g$ in $d$ with respect to the orientation, we obtain two
closed curves: $\g^l_d$ and $\g^r_d$, where the tangent vector of
$\g^l_d$ rotates clockwise in the neighborhood of $d$, and that of
$\g^r_d$ rotates counter-clockwise. See Figure \ref{Fig:NBased}a.
Denote $D(\g)$ the set of double points of $\g$ and define $\langle
g \rangle \in\Z[\gO]$ by
 %*
 \begin{equation*}
\langle \g \rangle=\sum_{d\in D(\g)}([\g^l_d]-[\g^r_d]).
 \end{equation*}
 %*
\fig{width=5in}{smooth}{Smoothing a crossing and constructing
loops from pieces}{Fig:NBased}

Define the Whitney index in the non-based case as $\windex(\g,X) \bydef
w(\g,X)([\g]-1) \in \Z[\gO]$ where $1$ is the class of the trivial loop,
and $w(\g,X)$ is, like for pointed curves, the number of full rotations of
$\tg(t)$ relative $X$. (Note that the definition of $w(\g,X)$ does not
require the base point.)

The definition of $\gT$ remains almost the same as in Section
\ref{Sub:Finite}, with few straightforward changes. Namely, we drop
the requirement that $u<v$ when we consider intersection points
$d=\g(v)=\g_T(u)$ of $\g$ with $\g_T$, and the corresponding loop
$\tau_{d,T}(\g)$ consists of just two arcs: a piece of $\g$
parameterized by the arc $\overline{vu}$ of the circle, and
$\Traj_t(\g(u))$ for $0 \le t \le T$. See Figure \ref{Fig:NBased}b
(the loop $\tau_{d,T}(\g)$ is shown in bold). The following theorem
is an analogue of Theorem \ref{Th:Finite} for curves without base
points:

 \begin{thm}\label{Th:NBFinite}
Let $\g: S^1 \to \S$ be a generic immersed curve without the base point.
Let $X$ be a vector field on $\S$ which does not vanish on $\g$. Suppose
that $T>0$ is such that all intersections of $\g$ with $\g_T$ are
transversal double points. Then
 %*
 \begin{equation}\label{Eq:NBFinite}
\langle \g \rangle = \gT - \windex(\g,X).
 \end{equation}
 %*
 \end{thm}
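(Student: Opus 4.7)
The plan is to mimic the strategy used for Theorem~\ref{Th:Finite}: first verify the formula for an infinitesimally small time $T=\eps$, then show that both sides of \eqref{Eq:NBFinite} are invariant as $T$ varies through its admissible range, so that the identity extends to arbitrary $T$.

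First I would establish the non-based analogue of \eqref{Eq:epsilon},
\[
\geps = \langle \g \rangle + \windex(\g,X),
\]
for sufficiently small $\eps>0$. For such $\eps$ every intersection $\g(v)=\g_\eps(u)$ sits close either to a self-intersection of $\g$ or to a point where $\tg$ is tangent to $X$. Near a self-intersection $d$ two intersections of $\g$ with $\g_\eps$ appear; since the restriction $u<v$ has been dropped, both are counted, and a local computation of the signs together with the arcs $\overline{vu}$ on $S^1$ shows that their combined contribution to $\geps$ is exactly $[\g^l_d]-[\g^r_d]$, summing over $d\in D(\g)$ to $\langle \g\rangle$. Near a tangency point $t_0$ only one intersection is produced; depending on whether the component of $\tg(t_0)$ along $X$ is positive or negative, the arc $\overline{vu}$ is the long or the short arc of $S^1$, giving a loop homotopic to $[\g]$ or to the trivial loop, respectively. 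Comparing the resulting local signs with $w(\g,X)$, viewed as the degree of $\tg$ with respect to $X$, the tangency contributions assemble into $w(\g,X)([\g]-1)=\windex(\g,X)$.

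Next I would check that $\gT$ is locally constant in $T$ on the open set of regular values. As $T$ crosses a value $T_0$ at which $\g$ becomes tangent to $\g_T$ at a single point, two intersections $d_1,d_2$ are born or annihilated; a standard transversality argument gives $\sgn(d_1)=-\sgn(d_2)$, and the loops $\tau_{d_1,T}$ and $\tau_{d_2,T}$ cobound a small disk near the tangency, hence represent the same free homotopy class in $\gO$. Their contributions to $\gT$ therefore cancel. In contrast to the based case, no further bifurcation events arise from $\Traj_\pm$ crossing $\g$, since $\g$ has no endpoints and no distinguished point on $\S$ enters the definition; this is precisely why no correction analogous to $2\indT$ appears in \eqref{Eq:NBFinite}. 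Combining the two steps yields $\gT-\windex(\g,X)=\geps-\windex(\g,X)=\langle \g\rangle$ for every admissible $T$, as required.

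The step I expect to be most delicate is the small-$\eps$ bookkeeping: verifying that the two intersections near a self-intersection $d$ produce precisely the pair $\bigl([\g^l_d],[\g^r_d]\bigr)$ with the correct relative signs, and that the tangency contributions combine to $[\g]-1$ rather than $[\g]+1$. The latter requires tracking the sign of the rotation of $\tg$ relative to $X$ carefully both at tangencies pointing along $X$ and at those pointing against $X$. By comparison, the Reidemeister~II cancellation needed to make $\gT$ locally constant in $T$ is routine.
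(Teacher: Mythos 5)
Your proposal is correct in substance but follows a genuinely different route from the paper. The paper proves Theorem~\ref{Th:NBFinite} by the same intersection-theoretic machinery as Theorem~\ref{Th:Finite}: it replaces the triangle $C^\eps$ by an annular configuration space $\Ann$ of pairs of points on $S^1$ with both arcs of length $\ge\eps$, interprets $\langle\g\rangle$ and $\gT$ as intersection numbers of $\ev_\tau(\Ann)$ with the diagonal $\Delta$ and its shift $\Delta_T$ inside $\uS\times\uS$, and accounts for the difference by intersecting the two boundary circles of $\Ann$ with the $3$-chain $W_T$; these two circles produce precisely $w(\g,X)[\g]$ and $-w(\g,X)\cdot 1$, and the absence of the edges $\partial_\pm$ is what kills the $2\indT$ term. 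Your argument instead verifies the identity at $T=\eps$ by the local analysis underlying \eqref{Eq:epsilon} (adapted to count both intersections near each double point and both co- and anti-directed tangencies, yielding $w(\g,X)([\g]-1)$) and then propagates it to arbitrary $T$ by a Reidemeister-II-type cancellation, correctly observing that with no base point and no condition $u<v$ the only bifurcations are tangencies of $\g_T$ with $\g$, which cancel in sign and free homotopy class. This is more elementary --- no universal cover or four-dimensional intersection theory --- at the price of two rounds of sign bookkeeping. The one point you should not wave away as routine: for a fixed $X$ the one-parameter family $T\mapsto\g_T$ is rigid, so you cannot perturb within it to ensure that non-transversal values of $T$ are isolated and exhibit only generic tangencies; you need to note that both sides of \eqref{Eq:NBFinite} are unchanged under a small perturbation of $X$ preserving transversality at the given $T$, and perturb $X$ to make the family generic before running the cobordism argument. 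With that addendum the proof goes through.
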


Thus $\langle \g \rangle$ provides a simple obstruction for
pushing a curve off itself:
 \begin{cor}
Let $\g$ be a generic curve on $\S$. If $\langle \g
\rangle\notin\Z[[\g]-1]\subset\Z[\gO]$ then the curve $\g$ cannot be pushed
off itself by a flow of vector field, i.e.\ any shifted copy $\g_T$
intersects the initial curve $\g$.
 \end{cor}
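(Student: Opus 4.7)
The plan is to argue by contrapositive: assuming there exists some vector field $X$ on $\S$ and some $T>0$ with $\g \cap \g_T = \emptyset$, I want to conclude $\langle \g \rangle \in \Z[[\g]-1]$. The strategy is simply to apply Theorem~\ref{Th:NBFinite} with this $X$ and $T$, once I verify its hypotheses.

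First I would check that the hypotheses of Theorem~\ref{Th:NBFinite} are automatic from disjointness. The vector field $X$ cannot vanish on $\g$, since any zero $q \in \g$ of $X$ is a fixed point of $\Traj_T$ and would therefore lie in $\g \cap \g_T$, contradicting our assumption. The transversality condition on intersections of $\g$ with $\g_T$ is vacuous because there are no such intersections at all.

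Hence Theorem~\ref{Th:NBFinite} applies. The index set $D_T(\g)$ appearing in the definition of $\gT$ is empty, so $\gT = 0$. Substituting into~\eqref{Eq:NBFinite} gives
\[
\langle \g \rangle \;=\; -\windex(\g, X) \;=\; -w(\g, X)\bigl([\g]-1\bigr) \;\in\; \Z[[\g]-1],
\]
which is exactly the contrapositive we wanted. I do not anticipate a real obstacle: the corollary is just Theorem~\ref{Th:NBFinite} read in the degenerate case where $\g_T$ avoids $\g$, and the only small point worth noting is that disjointness forces $X$ to be non-vanishing on $\g$ for free, so no perturbation of $X$ is needed.
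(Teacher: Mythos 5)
Your proposal is correct and follows exactly the route the paper intends: the corollary is stated as an immediate consequence of Theorem~\ref{Th:NBFinite}, read in the degenerate case where $D_T(\g)=\emptyset$ so that $\gT=0$ and $\langle\g\rangle=-w(\g,X)([\g]-1)$. Your observation that a zero of $X$ on $\g$ would itself be a point of $\g\cap\g_T$, so the non-vanishing hypothesis comes for free, is a nice touch that the paper leaves implicit.
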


 \begin{ex}\label{Ex:Pushoff}
Let $\S=\S_{0,3}$ be a doubly-punctured plane and $\g$ be a
figure-eight curve with lobes going around the punctures of $\S$.
Then $\g$ cannot be pushed off itself.
 \end{ex}

Note also that the only term in formula \eqref{Eq:NBFinite}, which
depends on $T$, is $\gT$. Thus it is, in fact, independent of $T$
and different values of $T$ give the same value of $\gT$.

\section{Proofs of Theorems \ref{Th:Finite} and \ref{Th:NBFinite}}
 \label{Sec:Proofs}

\subsection{Idea of the proofs}
The main idea is rather simple. We interpret both sides of the
formula \eqref{Eq:Finite} as two different ways to compute an
intersection number of two 2-chains in a 4-manifold. Very roughly
speaking, the manifold is $\S\times\S$ and two chains are
$\{(\g(u),\g(v)) \mid \ 0\le u\le v\le 1\}$ and the diagonal
$\{(x,x) \mid x\in\S\}$. Their intersection is the left part of the
formula (the sum of signs of double points). The same intersection
number can be also computed by intersecting the boundary of the
first chain with a 3-chain, constructed by homotopy
$\{(x,\Traj_t(x)) \mid \ 0\le t\le T\}$ of the diagonal. This
comprises the right hand side of the formula. The reality is
somewhat more complicated so some technicalities are involved. In
particular, to split the formula by homotopy classes $\tau\in\pi$ we
have to work in the universal covering space $\uS$ of $\S$. Also, to
push the boundary of the first chain off the diagonal, we take a
certain $\eps$ cut-off. Theorem \ref{Th:NBFinite} is completely
similar, except that we use a slightly different configuration
space. We treat the case of Theorem \ref{Th:Finite} in details and
indicate modifications needed for Theorem \ref{Th:Finite}.

\begin{rem}
An alternative combinatorial proof can be given as follows:
note that the equality is true for sufficiently small $T$.
Then let the flow of the vector field proceed, and check that the equality is preserved under any singular occurrence (i.e., a tangency or a passage of
the base point through a strand, for different relative orientations of the strands and directions of motion). Slight perturbations of the field may be
needed in order for the singular occurrence to be generic.
This proof, however, while somewhat shorter (although the number of different
cases is quite significant), does not explain the topological nature of these
formulas, understanding of which was the primary goal in this paper.
\end{rem}

\subsection{Configuration spaces}
Denote by $\uS$ the set of homotopy classes of paths $\xi: [a,b] \to
\S$ such that $\xi(a) = p$\,; $\uS$ is a universal covering space
for $\S$ and inherits its orientation. The projection
$\proj:\uS\to\S$ maps a path $\xi$ into its final point $\xi(b)$.
The vector field $X$ can be lifted to the vector field
$\widetilde{X}$ on $\uS$ such that $\proj(\widetilde{\Traj}(a)) =
\Traj(\proj(a))$, where $\widetilde{\Traj}$ means the trajectory of
$\widetilde{X}$.
%A lifting of a trajectory $\Traj(x)$ of $X$ is a trajectory $\widetilde{\Traj}(x)$ of the field $\widetilde{X}$.

Consider a configuration space $C=\{(u,v) \mid \ 0<u<v<1\}$ of
(ordered) pairs of points on $[0,1]$. To compactify $C$, we pick a
small $\eps>0$ and take an $\eps$-cut-off:
 %*
 \begin{equation*}
C^\eps=\{(u,v) \mid \ \eps\le u<u+\eps\le v\le 1-\eps\}\subset C.
 \end{equation*}
 %*
Note that apart from compactifying $C$, the condition $u+\eps\le v$
used in this cut-off allows us to push the boundary of $C^\eps$ off
the diagonal $u=v$. Topologically, $C^\eps$ is a closed 2-simplex,
whose boundary consists of three intervals, on which $u=\eps$,
$v=1-\eps$, and $u+\eps=v$, respectively.

The curve $\g$ defines an evaluation map $\ev: C \to \S \times \S$,
$\ev(u,v) \bydef (\g(u),\g(v))$ --- we forget the curve itself and
leave only the two marked points. For a fixed class $\tau\in\pi$
define $\ev_\tau: C^\eps \to \uS\times \uS$ by
 %*
 \begin{equation*}
\ev_\tau:(u,v)\mapsto (\xi_1,\xi_2)
 \end{equation*}
 %*
where $\xi_1$ is an arc $\g(t)$ with $t\in[0,u]$ (so its final point
is $\g(u)$) and $\xi_2$ is made of the loop $\tau$ followed by an
arc $\g(1-t)$ with $t\in[0,1-v]$ (so its final point is $\g(v)$),
see Figure \ref{Fig:Lift}. From the definition of $\ev_\tau$ we
immediately get $\proj \circ \ev_\tau(u,v)=\ev(u,v)$, so $\ev_\tau$
is a lift of the evaluation map.

\fig{height=1.2in}{lift}{Lifting the evaluation map}{Fig:Lift}

The diagonal
 %*
 \begin{equation*}
\Delta=\{(x,x) \mid x\in\uS\}
 \end{equation*}
 %*
is a proper 2-dimensional submanifold of $\uS \times \uS$, which
inherits the orientation of $\uS$ (as the image of the map $x\mapsto
(x,x)$). For a sufficiently small $\eps$ the intersection number
$\I(\ev_\tau(C^\eps),\Delta)$ of the image of $C^\eps$ with $\Delta$
is well-defined and independent of $\eps$, since $\ev_\tau(C^\eps)$
is oriented, compact, and its boundary does not intersect $\Delta$.

Denote by $\langle \g \rangle^\tau$ the coefficient of $\tau$ in
$\langle \g \rangle$, see \eqref{Eq:Sum}.
 \begin{lem}\label{Lem:Intersec}
We have $\I(\ev_\tau(C^\eps),\Delta)=\langle \g \rangle^\tau$
 \end{lem}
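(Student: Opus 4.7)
The plan is to show that $\ev_\tau(C^\eps)$ meets $\Delta$ transversally at a finite set of interior points which is in natural bijection with $\{d\in D(\g):\tau_d(\g)=\tau\}$, and that each local intersection sign equals $\sgn(d)$; summing then gives the coefficient of $\tau$ in $\langle \g\rangle$ as defined by \eqref{Eq:Sum}.

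First I would unpack the condition $\ev_\tau(u,v)\in\Delta$. Writing $\ev_\tau(u,v)=(\xi_1,\xi_2)$ as in the construction, membership in $\Delta$ says $\xi_1=\xi_2$ as elements of $\uS$, equivalently that both paths project to the same point in $\S$ and are path-homotopic rel endpoints. The first condition forces $\g(u)=\g(v)$, so that $(u,v)$ corresponds to a self-intersection $d$ of $\g$ with $u<v$. Since $\xi_1\simeq\g|_{[0,u]}$ and $\xi_2\simeq\tau\cdot(\g|_{[v,1]})^{-1}$, the homotopy condition becomes $\tau\simeq\g|_{[0,u]}\cdot\g|_{[v,1]}$, which is exactly $\tau_d(\g)=\tau$. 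Ruling out boundary contributions is immediate for small $\eps$: on $v=u+\eps$ the immersion property forces $\g(u)\ne\g(u+\eps)$, while on $u=\eps$ or $v=1-\eps$ the assumption that double points avoid $p$, together with continuity, pushes $\g(\eps)$ and $\g(1-\eps)$ away from any self-intersection.

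Next I would check transversality and local signs. At an interior intersection point, the tangent plane to $\ev_\tau(C^\eps)$ is spanned by $(\tg(u),0)$ and $(0,\tg(v))$ in $T_{(d,d)}(\uS\times\uS)$, while the tangent plane to $\Delta$ is $\{(w,w):w\in T_d\uS\}$. Genericity of $d$, i.e.\ linear independence of $\tg(u)$ and $\tg(v)$, makes these two 2-planes complementary in the 4-dimensional ambient space. Choosing local coordinates at $d$ with $\tg(u)=\partial_x$ and $\tg(v)=a\partial_x+b\partial_y$, so that $\sgn(d)=\sgn(b)$, a direct $4\times 4$ determinant computation identifies the local intersection sign with $\sgn(d)$. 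Summing over all such $d$ gives $\I(\ev_\tau(C^\eps),\Delta)=\sum_{d\in D(\g),\,\tau_d(\g)=\tau}\sgn(d)=\langle \g\rangle^\tau$ as claimed.

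The main obstacle is the sign bookkeeping: the bijection with double points is a formal consequence of the lift construction, but to recover $+\sgn(d)$ rather than $-\sgn(d)$ at each intersection one must fix the orientations of $C^\eps$ (as a subset of $\{u<v\}$), of $\uS$ (pulled back from $\S$), of $\Delta$ (as the image of $x\mapsto(x,x)$), and of the ambient $\uS\times\uS$ in a mutually consistent way. I would anchor the convention on a model case—a single positive crossing in $\R^2$ with trivial $\pi$ and $\tau=1$—before trusting the general calculation.
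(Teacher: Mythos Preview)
Your proposal is correct and follows exactly the paper's approach: identify points of $\ev_\tau(C^\eps)\cap\Delta$ with double points $d$ satisfying $\tau_d(\g)=\tau$, then verify that the local intersection sign is $\sgn(d)$. The paper's own proof is considerably terser (it dismisses the sign as ``a direct check''), so your explicit transversality argument, boundary exclusion, and caveat about fixing orientation conventions via a model crossing are all welcome elaborations of the same argument rather than a different route.
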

 \begin{proof}
Indeed, a pair $(u,v)$ gives an intersection point of
$\ev_\tau(C^\eps)$ with $\Delta$ iff the homotopy classes of paths
$\xi_1$ and $\xi_2$ coincide, i.e., endpoints $\g(u)$ and $\g(v)$ of
these paths coincide and the homotopy class of
$\xi_2^{-1}\circ\xi_1$ is trivial. This means that $d=\g(u)=\g(v)$
is a self-intersection point of $\g$ and the homotopy class of the
loop $\g(t)$ with $t\in[0,u]\cup[v,1]$ equals to $\tau$. The local
intersection sign is easy to compute and a direct check assures that
orientations of $\ev_\tau(C^\eps)$ and $\Delta$ give the positive
orientation of $\uS\times\uS$ iff $\sgn(d)=+1$.
 \end{proof}

Consider the $2$-chain $\Delta_T \bydef \{(x,
\widetilde{\Traj}_T(x)) \mid x \in \uS\}$. The homotopy $W_T =
\{(x,\widetilde{\Traj}_t(x)) \mid 0 \le t \le T\}$ is a $3$-chain
such that $\partial W_T = \Delta_T - \Delta$. Denote $\I$ the
intersection number, so that one has
 %*
 \begin{equation}
 \label{Eq:Intersec}
\I(\ev_\tau(C^\eps),\Delta)=\I(\ev_\tau(C^\eps),\Delta_T)-\I(\ev_\tau(\partial
C^\eps),W_T)
 \end{equation}
 %*

Denote by $\gT^\tau$ the coefficient of $\tau$ in $\gT$, see
\eqref{Eq:SumT}.
 \begin{lem}\label{Lem:IntersecT}
We have $\I(\ev_\tau(C^\eps),\Delta_T)=\gT^\tau$
 \end{lem}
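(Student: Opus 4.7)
The plan is to repeat the argument of Lemma~\ref{Lem:Intersec}, replacing the diagonal $\Delta$ by the graph $\Delta_T$ of the lifted flow $\widetilde\Traj_T$. First I would characterize the intersection points of $\ev_\tau(C^\eps)$ with $\Delta_T$. A pair $(u,v)\in C^\eps$ gives such a point iff $\xi_2 = \widetilde\Traj_T(\xi_1)$ in $\uS$. By construction of the lift, $\widetilde\Traj_T(\xi_1)$ is represented by $\xi_1$ concatenated with the trajectory arc $t\mapsto \Traj_t(\g(u))$, $t\in[0,T]$. Thus, as homotopy classes of paths from $p$,
$$
\tau\cdot\overline{\g|_{[v,1]}} \;\simeq\; \g|_{[0,u]}\cdot\Traj_{[0,T]}(\g(u)).
$$

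Projecting endpoints to $\S$ forces $\g(v)=\Traj_T(\g(u))=\g_T(u)$, so $d:=\g(v)=\g_T(u)$ is an intersection of $\g$ with $\g_T$; the inequality $u<v$ coming from $(u,v)\in C^\eps$ then places $d$ in $D_T(\g)$. Rearranging the displayed equation, the three-arc loop
$$
\g|_{[0,u]} \;\cdot\; \Traj_{[0,T]}(\g(u)) \;\cdot\; \g|_{[v,1]}
$$
— which is precisely $\tau_{d,T}(\g)$ — represents $\tau$. Conversely, each $d\in D_T(\g)$ with $\tau_{d,T}(\g)=\tau$ arises this way from a unique pair $(u,v)$. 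The transversality hypothesis on $\g\cap\g_T$ and the assumption $d\neq p$ ensure that, for $\eps$ sufficiently small, all such pairs lie in the interior of $C^\eps$ and give transverse intersections of $\ev_\tau(C^\eps)$ with $\Delta_T$ inside $\uS\times\uS$.

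The main obstacle is the local sign check. At such a point, $T\Delta_T$ is the graph of the differential $D\widetilde\Traj_T$ applied to $T\uS$, while $T\ev_\tau(C^\eps)$ is spanned by one vector coming from varying $u$ (a tangent to $\g$ in the first factor, trivial in the second) and one from varying $v$ (a tangent to $\g$ in the second factor, up to a sign dictated by the backward parametrization of the trailing arc of $\xi_2$). Using the identity $(D\widetilde\Traj_T)_*\tg(u) = \tg_T(u)$ at $\xi_1$, a direct computation in oriented local coordinates on $\uS\times\uS$ shows that the intersection sign equals $+1$ exactly when the ordered basis $(\tg_T(u),\tg(v))$ agrees with the orientation of $\S$, i.e.\ when $\sgn(d)=+1$. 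Summing the signed contributions over all such $(u,v)$ then yields $\gT^\tau$. The orientation bookkeeping is the only delicate step; everything else is a direct translation between path concatenation in $\uS$ and the action of the flow on the universal cover.
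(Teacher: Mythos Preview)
Your proposal is correct and follows the same approach as the paper: the paper's proof is literally the single sentence ``The proof repeats the proof of Lemma~\ref{Lem:Intersec} above,'' and you have carried out precisely that repetition, replacing $\Delta$ by $\Delta_T$ and checking that the intersection condition $\xi_2=\widetilde\Traj_T(\xi_1)$ unwinds to $d=\g_T(u)=\g(v)\in D_T(\g)$ with $\tau_{d,T}(\g)=\tau$. Your sign verification is more explicit than the paper's ``direct check'' but reaches the same conclusion, so there is nothing to add.
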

\noindent The proof repeats the proof of Lemma \ref{Lem:Intersec}
above.

In view of \eqref{Eq:Intersec} it remains to compute
$\I(\ev_\tau(\partial C^\eps),W_T)$. Each intersection point of
$\ev_\tau(\partial C^\eps)$ with $W_T$ corresponds to a pair
$(u,v)\in\partial C^\eps$, such that the endpoint of the
corresponding path $\xi_2$ is obtained from the endpoint of $\xi_1$
by a diffeomorphism $\Traj_t$ for some $0\le t\le T$ (in other
words, $\Traj_t(\g(u))=\g(v)$) and the homotopy class of the path
$\xi_2^{-1}\circ\Traj_t(\g(u))\circ\xi_1$ is $\tau$. Let us study
separately each of the three parts of the boundary $\partial
C^\eps$. Denote by $\partial_-$ the part of $\partial C^\eps$ on
which $u=\eps$, by $\partial_+$ the part on which $v=1-\eps$, and
$\partial_=$ the part on which $u+\eps=v$ (with the induced
orientation).

Denote by $\langle \g_1,\g_2 \rangle^\tau$ the coefficient of $\tau$
in $\langle \g_1,\g_2 \rangle$, see \eqref{Eq:SumGG}.
 \begin{lem}\label{Lem:IntersecBdry1}
We have $\I(\ev_\tau(\partial_-),W_T)=\langle \g,\Traj_-
\rangle^\tau$ and $\I(\ev_\tau(\partial_+),W_T)=\langle \Traj_+,\g
\rangle^\tau$. Thus $\sum_\tau
\I(\ev_\tau(\partial_-),W_T)+\I(\ev_\tau(\partial_+),W_T)=2\indT$
 \end{lem}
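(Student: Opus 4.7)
My plan is to prove the two displayed equalities on the two boundary arcs of $C^\eps$ separately, by direct unwinding of the definition of $\ev_\tau$, and then to sum them and invoke the definition \eqref{Eq:DefInd} of $\indT$. The two cases are entirely symmetric, so it is enough to treat one of them (say $\partial_-$) in detail and merely indicate the analogous argument for the other.

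Fix a pair $(\eps,v)\in\partial_-$. By the definition of $\ev_\tau$, its image $(\xi_1,\xi_2)\in\uS\times\uS$ consists of $\xi_1=\g|_{[0,\eps]}$, a tiny arc at the basepoint $\tilde p$, and $\xi_2$ equal to the $\tau$-lift of the arc $\g|_{[v,1]}$ traversed backwards. Such a point belongs to $W_T$ precisely when there is some $t\in[0,T]$ with $\xi_2=\widetilde\Traj_t(\xi_1)$; projected down to $\S$ this is the condition $\Traj_t(\g(\eps))=\g(v)$, together with a constraint on the lift. For $\eps$ small, the standing hypotheses (transversality of $X$ to $\g$ at $p$ and $\Traj_{\pm T}(p)\notin\g$) put these pairs $(v,t)$ in natural bijection with the intersection points of $\g$ with the corresponding semi-trajectory of $X$ emanating from $p$. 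The lift condition then singles out precisely those intersections $d$ whose associated element $\tau_d\in\pi$ coincides with $\tau$. Thus the set-theoretic side of the identity is immediate from the definitions in Section~\ref{Sub:Compose}.

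The main obstacle, and the only real work, is the sign matching. At a transverse intersection point, the image $\ev_\tau(\partial_-)$ is tangent to the vector $(0,\tg(v))$, while the $3$-chain $W_T$ has tangent space spanned by the diagonal-type vectors $(\eta,d\widetilde\Traj_t\,\eta)$ for $\eta\in T\uS$ together with the flow direction $(0,\widetilde X)$. Computing the determinant of these four vectors against the product orientation on $\uS\times\uS$ reduces, via a block-triangular rearrangement, to $\det(\tg(v),X(\g(v)))$ in $T_{\g(v)}\S$. By definition this is exactly the sign $\sgn(d)$ attached to $d$ in $\langle\g,\Traj_-\rangle$ (resp.\ $\langle\Traj_+,\g\rangle$), so local signs agree, and summing over all intersection points in the given homotopy class yields the claimed coefficient of $\tau$.

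Finally, adding the two equalities and summing over $\tau\in\pi$ gives
\[
\sum_\tau\bigl(\I(\ev_\tau(\partial_-),W_T)+\I(\ev_\tau(\partial_+),W_T)\bigr)=\langle\g,\Traj_-\rangle+\langle\Traj_+,\g\rangle=2\indT
\]
by \eqref{Eq:DefInd}, which is the last assertion of the lemma.
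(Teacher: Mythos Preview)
Your argument is the paper's: unwind $\ev_\tau$ on each boundary arc to see that intersections with $W_T$ are exactly the crossings of $\g$ with a semi-trajectory of $p$, filtered by $\tau$ via the lift condition, and then match local signs (you supply an explicit block-determinant where the paper merely says ``it is easy to check''). One quibble, shared with the paper's own proof: from $\Traj_t(\g(\eps))=\g(v)$ with $t\in[0,T]$ one actually lands on $\Traj_+$, so $\partial_-$ produces $\langle\Traj_+,\g\rangle^\tau$ rather than the $\langle\g,\Traj_-\rangle^\tau$ appearing in the lemma's display (and symmetrically for $\partial_+$)---harmless, since the two terms are summed in \eqref{Eq:DefInd}, but your phrase ``the corresponding semi-trajectory'' is quietly absorbing this swap.
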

 \begin{proof}
For $(u,v)\in\partial_-$ we have $u = \eps$ and $2\eps\le v\le
1-\eps$, thus we are interested in points of intersection of
$\Traj_t(\g(\eps))$ with $\g(v)$ for $0\le t\le T$ and $2\eps\le
v\le 1-\eps$. But for small $\eps$ these are just intersections of
$\Traj_+$ (see Section \ref{Sub:Finite}) with the whole of $\g$.
Moreover, it is easy to check that the signs of such intersection
points coincide with the signs $\sgn(d)$, $d\in D(\Traj_+,\g)$
introduced in Section \ref{Sub:Compose}, which proves the first
equality of the lemma. The second equality is proven in the same
way, after we notice that $\eps\le u\le 1-2\eps$ and $v=1-\eps$
correspond to intersections of $\Traj_t(\g(u))$ with $\g(v)=p$,
i.e., of $\g$ with $\Traj_-$.
 \end{proof}

 \begin{lem}\label{Lem:IntersecBdry2}
We have $\I(\ev_{[\g]}(\partial_=),W_T)=w(\g,X)$ and
$\I(\ev_\tau(\partial_=),W_T)=0$ for $\tau\ne [\g]$.
 \end{lem}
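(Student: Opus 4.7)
My plan is to identify the intersections of $\ev_\tau(\partial_=)$ with $W_T$ explicitly and match them bijectively, with signs, to the tangencies of $\g$ with $X$. Recall that $\partial_=$ parametrizes pairs $(u,u+\eps)$ with $u\in[\eps,1-2\eps]$, and on this piece $\ev_\tau(u,u+\eps)=([\xi_1],[\xi_2])$ with $\xi_1=\g|_{[0,u]}$ and $\xi_2$ equal to the loop $\tau$ at $p$ followed by $\g|_{[u+\eps,1]}$ traversed backwards from $p$ to $\g(u+\eps)$.

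First I would pin down which homotopy classes $\tau$ can contribute. A pair $(u,u+\eps)\in\partial_=$ is a preimage of $W_T$ iff $[\xi_2]=\widetilde\Traj_t([\xi_1])$ in $\uS$ for some $t\in[0,T]$, which downstairs says $\Traj_t(\g(u))=\g(u+\eps)$ and upstairs says that the based loop $\xi_1\cdot\Traj_{[0,t]}(\g(u))\cdot\xi_2^{-1}$ at $p$ is nullhomotopic. For sufficiently small $\eps$, the short flow-arc from $\g(u)$ to $\g(u+\eps)$ and the arc $\g|_{[u,u+\eps]}$ both lie in a common disk, so concatenating one with the reverse of the other gives a contractible loop. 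Hence this based loop is freely homotopic to $\g\cdot\tau^{-1}$ and represents $[\g]\,\tau^{-1}\in\pi$. Thus the intersection is empty unless $\tau=[\g]$, yielding the second assertion.

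Second, for $\tau=[\g]$, I would count the solutions of $\Traj_t(\g(u))=\g(u+\eps)$ with $u\in[\eps,1-2\eps]$ and $t\in[0,T]$. Applying the implicit function theorem to $F(u,t):=\Traj_t(\g(u))-\g(u+\eps)$ in local coordinates near $\eps=0$, $t=0$, the solutions for small $\eps>0$ are in bijection with the points $u_0\in(0,1)$ where $\tg(u_0)$ is a positive multiple of $X(\g(u_0))$; at every other point of $\g$, $\tg(u)$ is linearly independent of $X(\g(u))$, so $\Traj_t(\g(u))$ and $\g(u+\eps)$ separate to first order in $(t,\eps)$ and no solution of $F=0$ survives.

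Third, I would compute signs. At each such solution the local intersection number equals the sign of the determinant comparing the pushforward of the orientation on $\partial_=$ under $\ev_{[\g]}$ with the orientation of $W_T$, inside the product orientation on $\uS\times\uS$. A direct calculation in local coordinates shows this sign equals $+1$ precisely when $\tg$ rotates counter-clockwise relative to $X$ at $u_0$, and $-1$ otherwise; this matches exactly the sign convention for $w(\g,X)$ from Section~\ref{Sub:Surf}, so summing over all tangencies yields $w(\g,X)$.

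The main obstacle will be the sign verification in the third step: it requires carefully unpacking three separate orientation conventions (of $\partial_=$, of the $3$-chain $W_T$ via $\partial W_T=\Delta_T-\Delta$, and of $\uS\times\uS$) and reconciling them with the definition of $w(\g,X)$. A secondary but important technical point is confirming that for sufficiently small $\eps$ no spurious zeros of $F$ appear outside fixed small neighborhoods of the tangency points, so that the local count truly gives the global intersection number.
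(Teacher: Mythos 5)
Your argument follows the paper's own proof essentially verbatim: identify the preimages of $W_T$ on $\partial_=$ with the positive tangencies of $\g$ with $X$, observe that the short flow arc forces the associated homotopy class to be $[\g]$, and match the local intersection signs with the sign convention for $w(\g,X)$. The one caveat is that the issue you defer as a ``secondary technical point'' --- ruling out solutions of $\Traj_t(\g(u))=\g(u+\eps)$ with $t$ bounded away from $0$, which do arise whenever a forward trajectory of a point of $\g$ returns to $\g$ near its starting point within time $T$, and which land in classes $\tau\ne[\g]$ --- is passed over in silence in the paper as well, and is in fact the genuinely delicate part (such solutions exist in simple examples, e.g.\ for a rotational field on an annulus, and one must check that their signed contributions cancel within each class $\tau$).
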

 \begin{proof}
For $(u,v)\in\partial_=$ we have $u+\eps=v$. For small $\eps$,
values of $u$ for which $\Traj_t(\g(u))=\g(u+\eps)$ correspond to
points in which $\tg$ looks in the direction of $X$. Each such point
gives the homotopy class $\tau=[\g]$ and is counted with a positive
sign, if $\tg$ turns counter-clockwise relative to $X$ in a
neighborhood of $\g(t)$ and with a negative sign otherwise.
Comparison with the definition of $w(\g,X)$ in Section
\ref{Sec:Selfintersec} proves the lemma.
 \end{proof}

This concludes the proof of Theorem \ref{Th:Finite}.

Finally, to prove Theorem \ref{Th:NBFinite} we need another
configuration space. Call $\Ann$ the set of pairs of points $(u,v)$
on a circle such that the length of the arc $\overline{uv}$ and the
length of the arc $\overline{vu}$ are both $\ge \eps$.
Topologically, $\Ann$ is an annulus, and its boundary $\partial\Ann$
is a union of two circles $L_+$ and $L_-$ defined by the conditions
$\lmod \overline{vu}\rmod = \eps$ and $\lmod \overline{uv}\rmod =
\eps$, respectively. The rest of the proof copies the proof of
Theorem \ref{Th:Finite}.


\begin{thebibliography}{9}

\bibitem{R2T2} Yu. Burman, M. Polyak, Geometry of Whitney-type formulas,
{\em Moscow Math.\ Journal}, {\bfseries 3(3)} (2003), pp.~823--832.

\bibitem{Chilling} D. R. J. Chillingworth, Winding numbers on surfaces I,
{\em Math. Ann.}, {\bfseries 196} (1972), pp.~218--249.

\bibitem{MC} M. Mcintyre, C. Cairns, A new formula for winding number,
{\em Geom. Dedicata} {\bfseries 46(2)} (1993), pp.~149--159.

\bibitem{Polyak} M. Polyak, New Whitney-type formulae for plane curves,
{\em AMS Transl.} {\bfseries 2} 190 (1999), pp.~103--111

\bibitem{Reinhart} B. L. Reinhart, The winding number on two manifolds,
{\em Ann. Inst. Fourier}, {\bfseries 10} (1960), pp.~271--283.

\bibitem{Smale} S. Smale, The classification of immersions of spheres in
euclidean spaces. {\em Ann.\ Math.} (1959), pp.~327--344.

\bibitem{Turaev} V. Turaev, Intersections of loops in two-dimensional
manifolds, {\em Math of the USSR--Sbornik} {\bfseries 35(2)} (1979),
pp.~229--250.

\bibitem{Whitney} H. Whitney, On regular closed curves in the plane, {\em
Compositio Math.}, {\bfseries 4} (1937), pp.~276--284.
 \end{thebibliography}
\end{document}